\newcommand{\lm}{\lambda}
\newcommand{\e}{\varepsilon}
\newcommand{\al}{\alpha}
\newcommand{\be}{\begin{equation}}
\newcommand{\ee}{\end{equation}}
\newcommand{\bq}{\begin{eqnarray}}
\newcommand{\eq}{\end{eqnarray}}
\newcommand{\ex}{\mathbb{E}}
\newcommand{\nind}{\noindent}
\newcommand{\nn}{\nonumber}
\newcommand{\lb}{\lbrace}
\newcommand{\rb}{\rbrace}
\newcommand{\pr}{\mathbb{P}}
\newcommand{\R}{\mathbb{R}}
\newcommand{\C}{\mathbb{C}}
\newcommand{\mc}{\mathcal}
 \newtheorem{thm}{Theorem}[section]
    \newtheorem{prop}[thm]{Proposition}
    \newtheorem{lmma}[thm]{Lemma}
        \newtheorem{rem}[thm]{Remark}
\journal{}
\begin{document}

\begin{frontmatter}



\title{Large deviations for the boundary local time of doubly reflected Brownian motion\tnoteref{Thx}}
\tnotetext[Thx]{The authors would like to thank Chris Rogers for his initial suggestions on how to approach this problem and Paavo Salmimen.}

\author[MF]{Martin~Forde}
\ead{martin.forde@kcl.ac.uk}
\address[MF]{Dept. Mathematics,
King's College London, London WC2R 2LS}

\author[RK]{Rohini Kumar}
\ead{rkumar@math.wayne.edu}
\address[RK]{Dept. Mathematics,
Wayne State University,
Detroit, MI 48202. }

\author[HZ]{Hongzhong Zhang}
\ead{hzhang@stat.columbia.edu}
\address[HZ]{Dept. Statistics,  Columbia University,
  New York, NY 10027.}
\begin{abstract}
We compute a closed-form expression for the moment generating function $\hat{f}(x;\lm,\al)=\frac1 \lambda\mathbb{E}_x(e^{\alpha L_{\tau}})$, where $L_t$ is the local time at zero for standard Brownian motion with reflecting barriers at $0$ and $b$, and $\tau \sim \mathrm{Exp}(\lm)$ is independent of $W$. By analyzing how and where $\hat{f}(x;\cdot,\al)$ blows up in $\lm$, a large-time large deviation principle (LDP) for $L_t/t$ is established using a Tauberian result and the G\"{a}rtner-Ellis Theorem.
\end{abstract}

\begin{keyword}
Brownian motion \sep Large deviation \sep Local time.


\end{keyword}

\end{frontmatter}


\section{Introduction}
Diffusion processes with reflecting barriers have found many applications in finance, economics, biology, queueing theory, and electrical engineering.  In a financial context, we recall the currency exchange rate target-zone models in \cite{KRU91} (see also \cite{SVE91, BER92, DJ94}, and \cite{BAL98}), where the exchange rate is allowed the float within two barriers; asset pricing models with price caps (see \cite{HAN99}); interest rate models with targeting by the monetary authority (e.g.\cite{FAR03}); short rate models with reflection at zero (e.g. \cite{GOL97, GOR04}); and stochastic volatility models (most notably the Heston and Sch\"{o}bel-Zhu models).  In queueing theory, diffusions with reflecting barriers arise as heavy-traffic approximations of queueing systems and reflected Brownian motions is ubiquitous in queueing models \cite{HAR85, ABA87a, ABA87b}.  More recently, reflected Ornstein-Uhlenbeck(OU) and reflected affine processes have been studied as approximations of queueing systems with reneging or balking \cite{WAR03a, WAR03b}.  Applications of reflected OU processes in mathematical biology are discussed in \cite{RIC87}.

  Doubly reflected Brownian motion also arises naturally in the solution for the optimal trading strategy in the large-time limit for an investor
  who is permitted to trade a safe and a risky asset under the Black-Scholes model, subject to proportional transaction costs with exponential or power utility (see \cite{GM13} and \cite{GGMS12} respectively).

The asymptotics in this article are obtained using a Tauberian theorem.  Tauberian results typically allow us to deduce the large-time or tail behavior of a quantity of interest based on the behavior of its Laplace transform (see Feller\cite{Fel71} or the excellent monograph of Bingham et al.\cite{BGT87} for details).  In situations where precise tail asymptotics for a distribution function are unknown but a moment generating function is available in closed-form, \cite{BF08} establish sharp tail asymptotics on logarithmic scale as applications of the standard Kohlbecker and Karamata Tauberian theorems, combined with an Esscher change of measure; their
methodology is applied to various time-changed exponential L\'{e}vy models (specifically the Variance Gamma model under a Gamma-OU clock and the Normal Inverse Gaussian model with a CIR clock) and the well known Heston stochastic volatility model.

In this article, we compute a closed-form expression for the moment generating function (mgf) $\hat{f}(x;\lambda,\al)=\mathbb{E}_x(e^{\alpha L_{\tau}})$, where $L_t$ is the local time at zero for standard Brownian motion with reflecting barriers at $0$ and $b$, and $\tau$ is an independent exponential random variable with parameter $\lm$. We do this by first deriving the relevant ODE and boundary conditions for $\hat{f}(x;\lm,\al)$ using an augmented filtration and computing the optional projection, and we then solve this ODE in closed form.  $\hat{f}(x;\lm,\al)$ does not appear amenable to Laplace inversion; however from an analysis of the location of the pole of $\hat{f}(x;\cdot,\al)$, we can compute the re-scaled log mgf limit $V(\al)=\lim_{t \to \infty}\frac{1}{t}\log \mathbb{E}_x(e^{\al L_t})$ for $\al \in \mathbb{R}$ using the Tauberian result in Proposition 4.3 in \cite{Kor02} via the so-called Fej\'{e}r kernel.  From this we then establish a large deviation principle for $L_t/t$ as $t \to \infty$ using the G\"{a}rtner-Ellis Theorem from large deviations theory,


Throughout the paper, we let $\pr_x(\cdot)=\pr(\cdot|X_0=x)$ denote the law of $X$ given its initial value at time 0 for any $x\in[0,b]$, and by $\ex_x(\cdot)$ the expectation under $\pr_x$. Further, we let $\ex\equiv\ex_0$.

\section{The modelling set up}

We begin by defining the Brownian motion $X$ with two reflecting boundaries.  Let $W_t$ be standard Brownian motion starting at 0. Then for any $x\in[0,b]$, there is a unique pair of non-decreasing,
continuous adapted processes $(L,U)$, starting at 0, such that
\bq
X_t= x+W_t \,+\, L_t \,-\,U_t \quad \in  \,\,\, [0,b], \quad \quad \forall t \ge 0. \nn
\eq
such that $L$ can only increase when $X=0$ and $U_t$ can only increase when $X=b$.  Existence and uniqueness follow easily from the more general work of Lions\&Sznitman\cite{LS84} the earlier work of Skorokhod\cite{Sko62}, or a bare-hands proof can be given by successive applications of the standard one-sided reflection mapping using a sequence of stopping times (see \cite{Wil92}.)

It can be shown that
\bq
 \lim_{t \to \infty} L_t/t =  \mathbb{E}(L_{\tau^b+\tau'})/\mathbb{E}(\tau^b+\tau'),&& \lim_{t \to \infty} U_t/t =  \mathbb{E}(U_{\tau^b+\tau'})/\mathbb{E}(\tau^b+\tau'), \nn \\
\lim_{t \to \infty} \frac{1}{t} \mathrm{Var}(L_t) = \sigma_L^2,  \quad && \quad\lim_{t \to \infty} \frac{1}{t} \mathrm{Var}(U_t) = \sigma_U^2, \nn
\eq
where $\tau^b=\inf \lb t: X_t=b \rb$, $\tau'=\inf \lb t \ge \tau^b: X_t=0 \rb$(see \cite{Wil92}) for some non-negative constants $\sigma_L,\sigma_U$.

\begin{prop}\label{prop: ODE}
Let $\tau$ denote an independent exponential random variable with parameter $\lm$.  Then for $\al<0$,
\bq
\hat{f}(x)\equiv\hat{f}(x; \lm,\al):= \frac{1}{\lm}\mathbb{E}_{x}(e^{\al L_{\tau}}) = \frac{1}{\lm}\int_0^{\infty} e^{-\lm t}\, \mathbb{E}_x(e^{\al L_t})dt \nn
\eq
is smooth on $(0,b)$ and satisfies the following  ODE
\bq
\label{eq:ODE}
\frac{1}{2}\hat{f}_{xx} = \lm \hat{f} \,-\, 1, \, \hat{f}_x(0)+\al\hat{f}(0)=\hat{f}_x(b)=0.
\eq
\end{prop}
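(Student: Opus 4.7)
The natural route is a verification argument: construct a classical solution $g$ of the ODE directly, then identify $g$ with $\hat{f}$ via It\^o's formula. This simultaneously yields the ODE, the boundary conditions, and the smoothness asserted in the statement.

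First, since the ODE $\frac{1}{2}g'' - \lm g = -1$ is linear with constant coefficients, its general solution is
\[
g(x) \,=\, A\,e^{\sqrt{2\lm}\,x} \,+\, B\,e^{-\sqrt{2\lm}\,x} \,+\, \tfrac{1}{\lm}.
\]
The two boundary conditions $g'(0)+\al g(0)=0$ and $g'(b)=0$ give a $2\times 2$ linear system for $(A,B)$; for $\al<0$ one checks the determinant is nonzero, so a unique $C^\infty$ solution $g$ on $[0,b]$ exists. Next, apply It\^o's formula to $Y_t:=g(X_t)\,e^{-\lm t+\al L_t}$. Using $dX_t=dW_t+dL_t-dU_t$, the fact that $dL_t$ is supported on $\{X_t=0\}$ and $dU_t$ on $\{X_t=b\}$, and the ODE together with the boundary conditions, the drift, $dL$, and $dU$ terms all simplify and one arrives at
\[
Y_t + \int_0^t e^{-\lm s+\al L_s}\,ds \;=\; g(x) \,+\, \int_0^t g'(X_s)\,e^{-\lm s+\al L_s}\,dW_s.
\]

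Since $g$ and $g'$ are bounded on $[0,b]$ and $\al<0$ gives $e^{\al L_s}\le 1$, the stochastic integral is a genuine martingale with zero mean. Taking $\ex_x(\cdot)$ and letting $t\to\infty$, dominated convergence (using $|Y_t|\le \|g\|_\infty e^{-\lm t}\to 0$) yields
\[
g(x) \,=\, \int_0^\infty e^{-\lm s}\,\ex_x(e^{\al L_s})\,ds \,=\, \hat{f}(x),
\]
so $\hat{f}$ coincides with the smooth $g$ and inherits both its regularity and the ODE with boundary conditions. The only delicate step is the application of It\^o's formula in the presence of the singular drivers $L$ and $U$; this is standard because $L,U$ are continuous and of bounded variation and $g\in C^2$, so no Tanaka correction arises and the boundary contributions are handled simply by evaluating $g'+\al g$ at $0$ and $g'$ at $b$ against $dL$ and $dU$ respectively. (Equivalently, one can derive the ODE first via the infinitesimal generator and the optional projection onto the filtration generated by $X$, as alluded to in the introduction, and then verify smoothness by regularity of the explicit solution.)
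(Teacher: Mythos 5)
Your verification argument is correct, but it is a genuinely different route from the paper's proof. The paper proceeds in the ``necessity'' direction: it first proves smoothness of $\hat{f}$ by conditioning on whether $\tau$ exceeds the hitting time $H_0$ (which yields $\hat{f}(x)$ explicitly in terms of $\cosh((b-x)\sqrt{2\lm})$ and $\hat{f}(0)$), and then derives the ODE and boundary conditions by forming the bounded martingale $\overline{M}_t=\ex(e^{\al L_\tau}\mid\overline{\mathcal F}_t)$ on the filtration augmented by $\sigma(\mathbf{1}_{\{\tau<t\}})$, projecting it optionally onto $\mathcal F_t$ to get $M_t=\lm\int_0^t e^{\al L_s-\lm s}ds+e^{\al L_t-\lm t}\lm\hat f(X_t)$, and insisting that the $dt$, $dL$ and $dU$ terms in its It\^o decomposition vanish. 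You instead go in the ``sufficiency'' direction: solve the boundary-value problem explicitly (the determinant is indeed nonzero for $\al<0$, being proportional to $\al-\al^*(\lm)$ with $\al^*(\lm)>0$), then apply It\^o to $Y_t=g(X_t)e^{-\lm t+\al L_t}$, use boundedness of $g'$ and $e^{\al L_s}\le 1$ to get a true martingale, and let $t\to\infty$ to identify $g$ with $\int_0^\infty e^{-\lm s}\ex_x(e^{\al L_s})ds=\frac1\lm\ex_x(e^{\al L_\tau})$ (note the paper's displayed second equality carries a redundant $1/\lm$; your normalization is the consistent one, matching \eqref{eq:Mlmal}). Your approach buys several things: it delivers the closed form of Proposition \ref{prop:f} as a byproduct, it avoids the augmented filtration and optional projection entirely, and it sidesteps the paper's slightly glossed step ``for $M_t$ to be a martingale, we must have\dots'', which strictly speaking requires arguing that the mutually singular measures $dt$, $dL_t$, $dU_t$ force each coefficient to vanish separately on its support. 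The paper's derivation, in exchange, explains where the Robin condition at $0$ and the Neumann condition at $b$ come from without guessing the form of the solution, and its hitting-time identity gives smoothness of $\hat f$ directly from the probabilistic definition. Both arguments are complete for the proposition as stated.
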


\begin{proof}
We first
show that $\hat{f} \in C^{\infty}(0,b)$.  To this end, note that for $x \in [0,b]$,
\bq
\mathbb{E}_{x}(e^{\al L_{\tau}}) = \mathbb{P}_{x}(\tau>H_0)\, \mathbb{E}_{0}(e^{\al L_{\tau}}) \,+\,  \mathbb{P}_{x}(\tau \le H_0) \, \nn
\eq
where $H_x=\inf \lb t: X_t=x\rb$ is the first hitting time to $x$.  The law of $(b-X_t; t\in[0,H_0])$ given $X_t=x$ is the same as that of $(|W_t|; t\in[0,H_b])$ given $|W_0|=b-x$. Thus by Eq. 2.0.1 on page 355 of \cite{BS02} we have
\be
\pr_x(\tau>H_0)=\ex_x(e^{-\lm H_0})=\frac{\cosh((b-x)\sqrt{2\lm})}{\cosh(b\sqrt{2\lm})}\nn.
\ee
It follows that
\bq
\ex_x(e^{\al L_\tau})=\frac{\cosh((b-x)\sqrt{2\lm})}{\cosh(b\sqrt{2\lm})}\,[\ex_0(e^{\al L_\tau})-1]\,+\,1\,.\nn
\eq
That is,
\bq\label{eq:f}
\hat{f}(x)=\frac{\cosh((b-x)\sqrt{2\lm})}{\cosh(b\sqrt{2\lm})}\,(\hat{f}(0)-\frac{1}{\lm})\,+\,\frac{1}{\lm}\,,\,\,\,\forall x\in[0,b]\,.
\eq
It can then be easily seen from (\ref{eq:f}) that $\hat{f}\in C^\infty(0,b)$.

To show that $\hat{f}$ satisfies (\ref{eq:ODE}) and the boundary conditions, we construct a martingale that is adapted to the filtration generated by $X$.  More specifically, we introduce the natural filtration $\mathcal{F}_t=\sigma(X_s; s\le t)$ and the augmented filtration $\overline{\mathcal{F}}_t=\mathcal{F}_t\vee\sigma(\mathbf{1}_{\{\tau<t\}})$, where $\sigma(\mathbf{1}_{\{\tau<t\}})$ is the sigma algebra generated by $\mathbf{1}_{\{\tau<t\}}$. Then we have a uniformly bounded, and hence uniformly integrable $\overline{\mathcal{F}}_t$-martingale:
\bq
\overline{M}_t:=\ex(e^{\al L_\tau}|\overline{\mathcal{F}}_t)=\mathbf{1}_{\{\tau<t\}}e^{\al L_\tau}+\mathbf{1}_{\{\tau\ge t\}}e^{\al L_t}\ex_{X_t}(e^{\al L_\tau}) = \mathbf{1}_{\{\tau<t\}}e^{\al L_\tau}+\mathbf{1}_{\{\tau\ge t\}}e^{\al L_t}\lm\hat{f}(X_t)\, . \nn
\eq

We now define the optional projection of $\overline{M}_t$: using the fact that $X$ and $\tau$ are independent, we have
\bq
M_t = \ex(\overline{M}_t|\mathcal{F}_t) =\lm \int_0^t e^{\al L_s-\lm s}ds \,+ \,e^{\al L_t-\lm t}\lm\hat{f}(X_t)\, . \nn
\eq
Further, $M_t$ is a $\mathcal{F}_t$-martingale, in that for all $t>s$ we have
\bq
\ex(M_t|\mathcal{F}_s) =\ex(\ex(\overline{M_t}|\mathcal{F}_t)|\mathcal{F}_s) =\ex(\overline{M}_t|\mathcal{F}_s) =\ex(\ex(\overline{M}_t|\overline{\mathcal{F}}_s)|\mathcal{F}_s) =\ex(\overline{M}_s|\mathcal{F}_s) = M_s. \nn
\eq

Applying It\=o's lemma to $M_t$, we have that
\bq
dM_t = e^{\al L_t-\lm t}\left[\lm dt+\lm \hat{f}(X_t)(\al dL_t-\lm dt)+\frac{1}{2}\lm\hat{f}_{xx}(X_t)dt+\lm\hat{f}_x(X_t)(dW_t+dL_t-dU_t)\right]. \nn
\eq
But for $M_t$ to be a martingale, we must have
\bq
\frac{1}{2}\hat{f}_{xx}(x)-\lm \hat{f}(x)+1=0,\,\,\,\hat{f}_x(0)+\al \hat{f}(0)=0,\,\,\hat{f}_x(b)=0. \nn
\eq
This completes the proof.
\end{proof}

Solving the ODE in Proposition \ref{prop: ODE} we obtain the following result:

\begin{prop}\label{prop:f}
\bq
\label{eq:DesiredEq}
\hat{f}(x;\lm,\al)  =\frac{1}{\lm} + e^{x\sqrt{2 \lm} } A_\lm(\al) \,+\,
 e^{-x\sqrt{2 \lm} } B_\lm(\al)  \,
\eq
for $\lm >0,\al< 0$, where
\bq
A_{\lm}(\al)= \frac{\al e^{-b\sqrt{2\lm}}/\cosh(b\sqrt{2\lm})}{2\lm \,[
    \al^*(\lm)-\al]\,}, \, B_\lm(\al)= e^{2 \sqrt{2\lm}\, b} A_{\lm}(\al),\,
\al^*(\lm) = \sqrt{2\lm}\,\tanh(b\sqrt{2\lm}) .\label{eq:alphatstar}
\eq
\end{prop}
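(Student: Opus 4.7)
The plan is to proceed by straightforward integration of the second order linear ODE derived in Proposition~\ref{prop: ODE}, and then match the two constants of integration to the Robin/Neumann boundary data.

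First, I would note that the ODE $\tfrac12 \hat f_{xx} - \lm \hat f = -1$ is linear with constant coefficients. A particular solution is the constant $\hat f_p = 1/\lm$, and the associated homogeneous equation $\hat f_{xx} = 2\lm \hat f$ has the fundamental system $\{e^{x\sqrt{2\lm}}, e^{-x\sqrt{2\lm}}\}$ since $\lm>0$. Thus every $C^2$ solution must take the form
\begin{equation*}
\hat f(x) = \frac{1}{\lm} + A\,e^{x\sqrt{2\lm}} + B\,e^{-x\sqrt{2\lm}}
\end{equation*}
for some constants $A,B$ depending on $\lm$ and $\al$. Smoothness from Proposition~\ref{prop: ODE} ensures $\hat f$ is really of this form on all of $[0,b]$.

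Next I would impose the two boundary conditions. The right-hand Neumann condition $\hat f_x(b)=0$ gives $\sqrt{2\lm}(A e^{b\sqrt{2\lm}} - B e^{-b\sqrt{2\lm}})=0$, hence $B = e^{2b\sqrt{2\lm}} A$, which already matches the stated relationship $B_\lm(\al) = e^{2\sqrt{2\lm} b}A_\lm(\al)$. Plugging this into the left-hand Robin condition $\hat f_x(0) + \al \hat f(0) = 0$ yields
\begin{equation*}
\sqrt{2\lm}\,(A - B) + \al\bigl(\tfrac{1}{\lm} + A + B\bigr) = 0.
\end{equation*}
Substituting $B = e^{2b\sqrt{2\lm}} A$ and factoring $e^{b\sqrt{2\lm}}$ out of the bracketed terms, I would use the identities $1 - e^{2b\sqrt{2\lm}} = -2 e^{b\sqrt{2\lm}} \sinh(b\sqrt{2\lm})$ and $1 + e^{2b\sqrt{2\lm}} = 2 e^{b\sqrt{2\lm}}\cosh(b\sqrt{2\lm})$ to collapse the coefficient of $A$ into $2 e^{b\sqrt{2\lm}}\cosh(b\sqrt{2\lm})\,[\al - \sqrt{2\lm}\tanh(b\sqrt{2\lm})]$. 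Solving for $A$ then gives exactly the claimed $A_\lm(\al)$ in terms of $\al^*(\lm)=\sqrt{2\lm}\tanh(b\sqrt{2\lm})$.

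The only subtlety is that one needs $\al^*(\lm) - \al \ne 0$ so that the division is legitimate. Since the statement assumes $\al<0$ and clearly $\al^*(\lm)>0$ for $\lm>0$, this is automatic in the regime of interest. There is essentially no hard step here; the main care is just in keeping the hyperbolic-exponential bookkeeping straight so that the final expression matches the form $A_\lm(\al) = \al e^{-b\sqrt{2\lm}}/\bigl[2\lm\cosh(b\sqrt{2\lm})(\al^*(\lm)-\al)\bigr]$.
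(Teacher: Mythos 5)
Your proposal is correct and is exactly the argument the paper intends: the paper gives no separate proof beyond the remark ``Solving the ODE in Proposition~\ref{prop: ODE} we obtain the following result,'' and your computation (particular solution $1/\lm$, fundamental system $e^{\pm x\sqrt{2\lm}}$, Neumann condition at $b$ giving $B=e^{2b\sqrt{2\lm}}A$, Robin condition at $0$ giving $A_\lm(\al)$ via the hyperbolic identities) carries that out faithfully, including the observation that $\al<0<\al^*(\lm)$ makes the division legitimate.
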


\begin{rem}\label{rem:0}
Observe that the expression for $\hat{f}(x)$ involves $\sqrt{\lm}$, which has a branch point at $\lm=0$. However, $\hat{f}$  remains a continuous function across the branch cut at $\lm=0$; thus $\hat{f}$ is an analytic function of $\lm$ in some punctured disc about $\lm=0$.
As $\lim_{\lm\to 0} \lm\cdot \hat{f}(\lm)=0$, we conclude that $\lm=0$ is a removable singularity.

\end{rem}

 \begin{rem}\label{rem:al}
It can be verified that $\alpha^*(\cdot)$ in \eqref{eq:alphatstar} is a strictly increasing  mapping from $[0,\infty)$ onto $[0,\infty)$. Further, we may analytically extend $\alpha^*(\cdot)$ to get a strictly increasing, strictly concave, smooth real-valued function that maps $(-\frac{\pi^2}{8b^2},\infty)$ onto $\mathbb{R}$.
\end{rem}

\section{Large-time asymptotics}

In this section, we characterize the large-time behaviour of $L_t$.
To this end,
let us consider the inverse of $\al^*$, $V(\al):=(\al^*)^{-1}(\al)$ for $\alpha\in \mathbb R$. From Remark \ref{rem:al}, we know that $V$ is a strictly increasing, strictly convex smooth function, with range $(-\frac{\pi^2}{8b^2},\infty)$.

\begin{lmma}
\label{lem:AE}
The equality \eqref{eq:DesiredEq} also holds for all $\al\in \R,\lm \in \mathbb{C}$ such that $\Re(\lm)>V(\al)$.
\end{lmma}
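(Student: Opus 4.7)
The plan is to extend \eqref{eq:DesiredEq} from its currently known domain $\{\al<0,\,\lm>0\}$ (given by Proposition \ref{prop:f}) to the stated domain $D := \{\al\in\R,\ \Re(\lm) > V(\al)\}$ by analytic continuation, first in $\lm$ and then in $\al$. On the extended domain I would interpret the left-hand side as the Laplace integral $\hat f(x;\lm,\al) = \int_0^\infty e^{-\lm t}\,\ex_x(e^{\al L_t})\,dt$, which is holomorphic in $\lm$ on the half-plane $\{\Re(\lm)>\sigma(\al)\}$ where $\sigma(\al)$ denotes its abscissa of convergence.

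Next I would verify that the right-hand side is single-valued and meromorphic in $\lm\in\C$ despite the apparent branching of $\sqrt{2\lm}$: a direct substitution shows that the RHS of \eqref{eq:DesiredEq} (after absorbing $B_\lm=e^{2b\sqrt{2\lm}}A_\lm$) is invariant under $\sqrt{2\lm}\mapsto-\sqrt{2\lm}$, and Remark \ref{rem:0} removes the would-be singularity at $\lm=0$. The remaining poles come only from the denominator factor $\al^*(\lm)-\al$. By Remark \ref{rem:al}, $\lm=V(\al)$ is the unique real zero of $\al^*(\lm)-\al$. To rule out complex zeros in the half-plane $\{\Re(\lm)>V(\al)\}$, I would reparametrise $u=b\sqrt{2\lm}$ and study $u\tanh u=b\al$ in the right $u$-half-plane, showing that any non-real solution $u=p+iq$ forces $(p^{2}-q^{2})/(2b^{2})\le V(\al)$.

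Having established that both sides are holomorphic on the connected open set $\{\Re(\lm)>V(\al)\}$, the identity theorem extends the agreement from the non-empty open subset $\{\lm>0\}$ (for $\al<0$, by Proposition \ref{prop:f}) to the full half-plane. A second analytic continuation in $\al$ at fixed $\lm$, using joint holomorphy on an open set in $(\al,\lm)$, then gives the identity for every real $\al$.

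The main obstacle, I expect, is justifying holomorphy of the LHS on all of $D$ when $\al>0$, i.e.\ showing $\sigma(\al)\le V(\al)$, which is equivalent to an a priori bound $\ex_x(e^{\al L_t})=O(e^{(V(\al)+\varepsilon)t})$ for every $\varepsilon>0$, without invoking the LDP the paper ultimately aims to prove. I would sidestep the circularity by exploiting the regenerative structure of $L$ along the excursion cycles $\tau^b+\tau'$ recalled in the modelling setup: the increments of $L$ across consecutive cycles are i.i.d., so a Chernoff/Cram\'er bound applied to the associated renewal sum (using the mgf of $L_{\tau^b+\tau'}$, itself extractable from the $\al<0$ case of Proposition \ref{prop:f} analytically continued to a neighbourhood of $0$ in $\al$) yields the required exponential upper bound and closes the argument.
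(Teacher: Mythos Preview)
Your plan is workable but takes a genuinely different, and heavier, route than the paper.

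The paper's argument rests entirely on two classical facts about Laplace transforms of nonnegative measures (Theorems~5a--5b in \cite{Wid46}): the transform is analytic on the half-plane to the right of its real abscissa of convergence $\lm_c$, and $\lm_c$ is itself a singularity of the analytic continuation. These are applied in a two-step bootstrap. First, for a fixed real $\lm>0$, treat $\al$ as the Laplace variable in $\ex_x(e^{\al L_\tau})=\int_0^\infty e^{\al y}\,dF(y)$; since the closed-form RHS has its only $\al$-singularity at $\al^*(\lm)$, the abscissa in $\al$ equals $\al^*(\lm)$, so the integral converges for all $\al<\al^*(\lm)$. This already furnishes, for every $\al>0$, a real $\lm$ with $\hat f(x;\lm,\al)<\infty$---with no separate a~priori exponential bound needed. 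Second, for fixed $\al\in\R$, treat $\lm$ as the Laplace variable: the abscissa $\lm_c$ must be a real singularity of the (continued) transform, and the only real singularity of the RHS is $V(\al)$, so $\lm_c=V(\al)$ and the integral converges and equals the RHS on $\{\Re(\lm)>V(\al)\}$.

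By comparison, your regenerative/Chernoff argument is a legitimate substitute for the first bootstrap step---and indeed one can verify that $\ex_0(e^{\al\Delta L-\lm\Delta T})=1$ reduces exactly to $\al^*(\lm)=\al$, so the renewal exponent is $V(\al)$---but it is longer and needs the \emph{joint} mgf of $(\Delta L,\Delta T)$, not merely the mgf of $L_{\tau^b+\tau'}$, because the number of completed cycles by time $t$ is random. Your direct analysis of complex zeros of $u\tanh u=b\al$ is unnecessary in either framework: once the LHS is known to be analytic on $\{\Re(\lm)>V(\al)\}$, a hypothetical complex pole of the RHS there is immediately excluded by the identity theorem (LHS bounded, RHS not). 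The paper's use of the Widder singularity theorem is precisely the device that lets one read off $\sigma(\al)=V(\al)$ from the explicit formula without proving any growth estimate on $\ex_x(e^{\al L_t})$ in advance.
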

\begin{proof}
See \ref{AEproof}.
\end{proof}

\begin{prop}
\label{prop:LargeTmgf}
We have the following large-time behaviour for the moment generating function of $L_t$:
\bq
\lim_{t \to \infty}\frac{1}{t}\log \mathbb{E}_x(e^{\al L_t})= V(\al) <\infty\,\,\,\forall \al\in\R.\nn
\eq
\end{prop}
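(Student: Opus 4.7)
The plan is to interpret $\hat{f}(x;\lm,\al)$ as the Laplace transform (in $t$) of the positive function $g_\al(t):=\ex_x(e^{\al L_t})$ and to extract the exponential growth rate of $g_\al$ from the rightmost singularity of this Laplace transform via a Wiener--Ikehara type Tauberian argument.

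The first step is to invoke Lemma \ref{lem:AE}: the closed form \eqref{eq:DesiredEq} holds, and hence $\hat{f}(x;\cdot,\al)$ is analytic, throughout the open half-plane $\{\Re(\lm)>V(\al)\}$. Combined with the fact that $g_\al$ is continuous in $t$ and monotone (nondecreasing when $\al\ge 0$, nonincreasing when $\al\le 0$), this yields the upper bound $\limsup_{t\to\infty}t^{-1}\log g_\al(t)\le V(\al)$: for any $\lm_0>V(\al)$ the integral $\int_0^\infty e^{-\lm_0 t}g_\al(t)\,dt$ converges, monotonicity of $g_\al$ promotes this integrability to a pointwise bound $g_\al(t)=O(e^{\lm_0 t})$, and one then lets $\lm_0\downarrow V(\al)$.

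For the matching lower bound I would locate the pole of $\hat{f}$ at $\lm=V(\al)$ explicitly. Summing the two exponential terms in \eqref{eq:DesiredEq} at $x=0$, the algebraic identity $A_\lm(\al)(1+e^{2b\sqrt{2\lm}})=\al/[\lm(\al^*(\lm)-\al)]$ collapses \eqref{eq:DesiredEq} to
\bq
\hat{f}(0;\lm,\al) = \frac{\al^*(\lm)}{\lm\,(\al^*(\lm)-\al)}, \nn
\eq
and an analogous simplification of \eqref{eq:f} gives a closed form for general $x\in[0,b]$. Since $\al^*(\lm)-\al$ vanishes to first order at $\lm=V(\al)$ with $(\al^*)'(V(\al))>0$ (Remark \ref{rem:al}), $\hat{f}(x;\cdot,\al)$ has a simple pole there with residue $c_x(\al)$; a short sign check---using that $\lm=V(\al)$ and $\al$ carry the same sign and that $|b\sqrt{2V(\al)}|<\pi/2$ when $V(\al)<0$ (Remark \ref{rem:al})---shows $c_x(\al)>0$. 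Invoking the Tauberian theorem of \cite[Prop.~4.3]{Kor02} (the Fej\'er-kernel version) then yields $e^{-V(\al)t}g_\al(t)\to c_x(\al)>0$, whence $t^{-1}\log g_\al(t)\to V(\al)$; finiteness of $V(\al)$ for every real $\al$ is guaranteed by Remark \ref{rem:al}.

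The main technical obstacle is verifying the hypotheses of Korevaar's Tauberian theorem on the whole critical line $\{\Re(\lm)=V(\al)\}$: one must rule out any other zeros of $\al^*(\lm)-\al$ on this line and control the decay of $\xi\mapsto \hat{f}(x;V(\al)+i\xi,\al)-c_x(\al)/(i\xi)$ as $|\xi|\to\infty$. This is most delicate when $\al\le 0$, since then $V(\al)\le 0$ and the removable branch point of $\sqrt{\lm}$ at $\lm=0$ (cf.\ Remark \ref{rem:0}) sits on or to the right of the critical line and must be absorbed into the analytic continuation before Korevaar's theorem can be applied.
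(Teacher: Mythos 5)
Your overall strategy (read off the growth rate of $\ex_x(e^{\al L_t})$ from the rightmost singularity of $\hat f(x;\cdot,\al)$ via Lemma \ref{lem:AE} and Korevaar's Fej\'er-kernel Tauberian result, using monotonicity of $t\mapsto\ex_x(e^{\al L_t})$ as the side condition) is the same as the paper's, and your upper bound and the simplification $\hat f(0;\lm,\al)=\al^*(\lm)/[\lm(\al^*(\lm)-\al)]$ with a simple pole of positive residue at $\lm=V(\al)$ are correct. The problem is the lower bound: you invoke Proposition 4.3 of \cite{Kor02} to conclude the pointwise limit $e^{-V(\al)t}\ex_x(e^{\al L_t})\to c_x(\al)$, but that proposition does not deliver a pointwise limit. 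It only gives the Fej\'er-smoothed statement $\lim_{t\to\infty}\int_{-\infty}^{ct}v(t-s/c)K(s)\,ds=C$ for a \emph{fixed, finite} strip height $c$, under boundary regularity of $\hat v(\lm)-C/\lm$ on the finite segment $\{\Re\lm=0,\ |\Im\lm|\le c\}$ only. Upgrading this to $v(t)\to C$ is the full Wiener--Ikehara step (Theorem 4.2 in \cite{Kor02}): it requires letting $c\to\infty$, hence $L^1_{\mathrm{loc}}$ boundary behaviour on the \emph{entire} critical line $\Re\lm=V(\al)$ -- in particular ruling out further solutions of $\al^*(\lm)=\al$ on that line and controlling $\hat f$ as $|\Im\lm|\to\infty$ -- plus a Tauberian side condition on $v$ itself (which is not monotone). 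You explicitly flag this as ``the main technical obstacle'' but do not resolve it, so as written the crucial step of the lower bound is missing.

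The paper's proof shows you do not need that obstacle at all, because the proposition only asserts the logarithmic limit, not $v(t)\to C$. It applies Proposition 4.3 with a single finite $c$ (legitimate, since $\hat v(\lm)-C/\lm$ is analytic, hence uniformly continuous and $L^1$-convergent to its boundary values, on a small closed rectangle $|\Re\lm|\le\e$, $|\Im\lm|\le c$ around the isolated simple pole), and then combines the smoothed limit with the inequality $v(t)\ge v(s)e^{V(\al)(s-t)}$ for $t\ge s$ (for $\al>0$; reversed for $\al<0$), coming from monotonicity of $\ex_x(e^{\al L_t})$, together with the bound $0\le K(s)\le 2/(\pi s^2)$, to sandwich $0<\liminf_{t\to\infty}v(t)\le\limsup_{t\to\infty}v(t)<\infty$. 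That two-sided boundedness already gives $\frac1t\log\ex_x(e^{\al L_t})\to V(\al)$. So either adopt this finite-strip-plus-sandwich argument in place of your claimed pointwise limit, or genuinely verify the whole-line hypotheses of Wiener--Ikehara; in the latter case note the extra work is substantive (complex zeros of $\al^*(\lm)-\al$ and decay of $\hat f(x;V(\al)+i\xi,\al)$ as $|\xi|\to\infty$) and unnecessary for the statement being proved.
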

\begin{proof}
See \ref{LargeTmgfproof}.
\end{proof}

\begin{rem}
Note that $V(\cdot)$ does not depend on the starting value $x$, due to the ergodicity of $X$.
\end{rem}

The following lemma will be needed in the statement of the large deviation principle in the theorem that follows.

\begin{lmma}\label{lemma:rate_fn}
\begin{itemize}
\item[(a)] Define $
V^*(x):=\sup_{\al \in \mathbb{R}}[\al x-V(\al)]$ for all $x \ge 0$.  Then we have
\bq
\label{V*-formula}
V^*(x)=   \left\{
        \begin{array}{ll}
x \al^*(\lm^*)- \lm^* , \quad \quad\quad&\text{for }x>0\\
\pi^2/(8 b^2), &\text{for }x=0
        \end{array}\
        \right.,
        \eq
where $\lm^*=\lm^*(x)$ is the unique solution of $(\al^*)'(\lm)=1/x$ for fixed $x>0$.
\item[(b)] $V^*\in C([0,\infty))\cap C^1((0,\infty))$ and $V^*$ is a strictly convex function on $(0,\infty)$.
\item[(c)] $V^*$ attains its minimum value of zero uniquely at $x^*=\frac{1}{2b}$.
\end{itemize}
\end{lmma}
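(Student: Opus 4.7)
The plan is to treat $V^*$ as the classical Legendre--Fenchel transform of the smooth strictly convex function $V:\R\to(-\pi^2/(8b^2),\infty)$ from Remark \ref{rem:al}, and to convert everything into statements about $\al^*$ via the inverse-function identity $V'(\al)=1/(\al^*)'(V(\al))$. A useful preliminary is to verify that $V'$ is a smooth strictly increasing bijection from $\R$ onto $(0,\infty)$: this follows from strict concavity of $\al^*$ together with $(\al^*)'(\lm)\to\infty$ as $\lm\downarrow -\pi^2/(8b^2)$ (forced by $\al^*(\lm)\to-\infty$ on a bounded interval) and $(\al^*)'(\lm)\to 0$ as $\lm\to\infty$ (from $\al^*(\lm)\sim\sqrt{2\lm}$). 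With this in hand, the argument splits into three essentially routine pieces and one limit that needs real work.

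For part (a), fix $x>0$; bijectivity of $V'$ makes the supremum uniquely attained at the critical point $\al_x=(V')^{-1}(x)$. Reparametrising through $\lm^*:=V(\al_x)$, so that $\al_x=\al^*(\lm^*)$, the first-order condition $V'(\al_x)=x$ becomes $(\al^*)'(\lm^*)=1/x$ and the value becomes $V^*(x)=x\al^*(\lm^*)-\lm^*$ as claimed; uniqueness of $\lm^*$ comes from strict monotonicity of $(\al^*)'$. For $x=0$, $V^*(0)=\sup_\al[-V(\al)]=-\inf_\al V(\al)=\pi^2/(8b^2)$ by the range of $V$. For part (b), the implicit function theorem applied to $V'(\al)=x$ with $V''>0$ gives $\al_x\in C^\infty((0,\infty))$; the envelope theorem gives $(V^*)'(x)=\al_x$; and strict monotonicity of $\al_x$ in $x$ delivers strict convexity and $C^1$ smoothness on $(0,\infty)$.

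The main obstacle I anticipate is continuity of $V^*$ at the boundary point $x=0$: lower semi-continuity is automatic from the sup-definition, but upper semi-continuity must be proved by an asymptotic matching near the singularity of $\al^*$. The plan is: as $x\downarrow 0$, $(\al^*)'(\lm^*)=1/x\to\infty$ forces $\lm^*\downarrow -\pi^2/(8b^2)$; writing $\lm^*=-\mu^*$ and $\theta:=b\sqrt{2\mu^*}\to\pi/2^-$, the analytic continuations $\al^*(-\mu^*)=-\sqrt{2\mu^*}\tan\theta$ and $(\al^*)'(-\mu^*)=\tan\theta/\sqrt{2\mu^*}+b\sec^2\theta$ give $x\sim(\pi/2-\theta)^2/b$, so that $x\al^*(\lm^*)\to 0$ while $-\lm^*\to\pi^2/(8b^2)$, matching $V^*(0)$ from part (a).

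Part (c) then follows from the machinery of (b): the trivial bound $V^*(x)\geq 0\cdot x-V(0)=0$ gives $V^*\geq 0$, and strict convexity combined with $(V^*)'(x)=\al_x$ shows that the unique minimiser is the $x$ at which $\al_x=0$, equivalently $V'(0)=x$, equivalently $x=1/(\al^*)'(0)=1/(2b)$; the value at this minimiser is $V^*(x^*)=x^*\cdot 0-V(0)=0$.
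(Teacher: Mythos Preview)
Your proposal is correct and follows essentially the same approach as the paper's proof: reparametrise the Legendre transform through $\lm^*=V(\al_x)$ via the inverse-function identity, use strict concavity of $\al^*$ for well-posedness and strict convexity of $V^*$, and locate the minimiser by solving $(V^*)'(x)=\al^*(\lm^*(x))=0$. You are in fact more thorough than the paper in two places: you explicitly justify that $V'$ is a bijection onto $(0,\infty)$ by analysing the endpoint behaviour of $(\al^*)'$, and you carry out the asymptotic matching for continuity of $V^*$ at $x=0$ where the paper simply asserts the limit is ``easy to check''.
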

\begin{proof}
See \ref{rate_fnproof}.
\end{proof}

\begin{thm}
\label{thm:LDP}
 $L_t/t$ satisfies a large deviation principle on $[0,\infty)$ as $t \to \infty$ with a strictly convex rate function $V^*(x)$.
\end{thm}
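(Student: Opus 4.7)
The plan is to apply the G\"artner--Ellis theorem directly, using Proposition \ref{prop:LargeTmgf} to supply the limiting logarithmic moment generating function and Remark \ref{rem:al} to supply the needed regularity. Specifically, set $\Lambda_t(\alpha) := \log \mathbb{E}_x(e^{\alpha L_t})$; Proposition \ref{prop:LargeTmgf} gives $\Lambda(\alpha) := \lim_{t\to\infty} \Lambda_t(\alpha)/t = V(\alpha)$, finite for every $\alpha \in \mathbb{R}$. Thus the effective domain of $\Lambda$ is all of $\mathbb{R}$, so $0$ lies in its interior trivially.

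Next I would verify essential smoothness and lower semicontinuity of $\Lambda = V$. By Remark \ref{rem:al} and the definition $V = (\alpha^*)^{-1}$, $V$ is smooth and strictly convex on all of $\mathbb{R}$, in particular differentiable with finite derivative everywhere; since the effective domain is $\mathbb{R}$, the steepness condition at the boundary is vacuous. Hence $V$ is essentially smooth, and continuity gives lower semicontinuity. The G\"artner--Ellis theorem (e.g.\ Dembo--Zeitouni, Thm.\ 2.3.6) then yields a full LDP on $\mathbb{R}$ for $L_t/t$ with rate function $V^*$, namely
\begin{equation}
-\inf_{x\in G^\circ} V^*(x) \;\le\; \liminf_{t\to\infty} \tfrac{1}{t}\log \mathbb{P}_x(L_t/t \in G)
\;\le\; \limsup_{t\to\infty} \tfrac{1}{t}\log \mathbb{P}_x(L_t/t \in G) \;\le\; -\inf_{x\in \overline G} V^*(x). \nonumber
\end{equation}

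To transfer this LDP from $\mathbb{R}$ to $[0,\infty)$, I would observe that $L_t \ge 0$ almost surely, so the laws of $L_t/t$ are supported on $[0,\infty)$. It therefore suffices to check $V^*(x) = +\infty$ for $x<0$: for such $x$ one has $\alpha x - V(\alpha) \to +\infty$ as $\alpha \to -\infty$ because $V(\alpha) \to -\pi^2/(8b^2)$ by Remark \ref{rem:al}, while $\alpha x \to +\infty$. Combining this with Lemma \ref{lemma:rate_fn}(b), the rate function $V^*$ is continuous and strictly convex on $[0,\infty)$ and $+\infty$ off $[0,\infty)$, which yields the claimed strictly convex rate function on $[0,\infty)$.

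The only mildly delicate point, and hence the main obstacle I anticipate, is confirming that $V^*$ is a \emph{good} rate function (compact level sets), which G\"artner--Ellis requires for the closed-set upper bound in its full form. This follows by showing $V^*(x)/x \to \infty$ as $x\to\infty$: for any fixed $\alpha>0$, $V^*(x) \ge \alpha x - V(\alpha)$, so $\liminf_{x\to\infty} V^*(x)/x \ge \alpha$, and letting $\alpha\to\infty$ gives super-linear growth. Combined with continuity on $[0,\infty)$ and $V^* \equiv +\infty$ on $(-\infty,0)$, every sublevel set of $V^*$ is a closed bounded subset of $[0,\infty)$, hence compact, completing the argument.
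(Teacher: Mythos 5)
Your argument is correct and rests on the same pillars as the paper's proof: the G\"artner--Ellis theorem (Theorem 2.3.6 in Dembo--Zeitouni) fed with the limit $\Lambda=V$ from Proposition \ref{prop:LargeTmgf}. The difference is which sufficient condition of that theorem you verify for the full lower bound. You check essential smoothness and lower semicontinuity of $V$ itself --- differentiability everywhere via Remark \ref{rem:al}, with steepness vacuous because $D_V=\mathbb{R}$ --- whereas the paper works on the dual side, using the strict convexity of $V^*$ from Lemma \ref{lemma:rate_fn} to argue that the exposed points of $V^*$ are $(0,\infty)$ and that the exposing hyperplanes lie in $D_V^\circ$. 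Your route has the advantage of not needing Lemma \ref{lemma:rate_fn} for the LDP itself (only for the ``strictly convex'' qualifier), and you make explicit two points the paper leaves implicit: that $V^*\equiv+\infty$ on $(-\infty,0)$ and the laws of $L_t/t$ are supported on $[0,\infty)$, so the LDP on $\mathbb{R}$ transfers to $[0,\infty)$; and that $V^*$ is a good rate function via superlinear growth (in fact goodness is automatic here, by Lemma 2.3.9 in Dembo--Zeitouni, since $D_V=\mathbb{R}$ --- it is a conclusion of the theorem rather than a hypothesis needed for the upper bound, a minor misstatement in your last paragraph that does not affect the argument). Both verifications are legitimate; yours is slightly more self-contained and rigorous about the state space, the paper's is shorter given that Lemma \ref{lemma:rate_fn} is already in hand.
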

\begin{proof}
From Lemma \ref{lemma:rate_fn}, we know that $V^*$ is a strictly convex function on $(0,\infty)$. Hence the set of exposed points of $V^*$ is $(0,\infty)$ (see Definition 2.3.3 in \cite{DZ98}), and since $D^0_V= (-\infty, \infty)$, the exposing hyperplane will always lie in $D^0_V$.  Therefore, by  the G\"{a}rtner-Ellis Theorem (see Theorem 2.3.6 in \cite{DZ98}), $L_t/t$ satisfies the LDP with convex rate function $V^*(x)$. \end{proof}





\appendix

\renewcommand{\theequation}{A-\arabic{equation}}
\setcounter{equation}{0}

\renewcommand{\theequation}{B-\arabic{equation}}
\setcounter{equation}{0}  

\section{Proof of Lemma \ref{lem:AE}}\label{AEproof}
Recall from Propositions \ref{prop: ODE} and \ref{prop:f} that,  for $\al<0$ and $\lm>0$,
\bq
\label{eq:Mlmal}
\int_0^{\infty} e^{-\lm t}\, \mathbb{E}_x(e^{\al L_t}) dt=\hat{f}(x;\lm,\al)=\frac{1}{\lm} \,+\, e^{x\sqrt{2 \lm} } A_{\lm}(\al) \,+\,
 e^{-x\sqrt{2 \lm} } B_\lm(\al) \,
\eq
 where $A_{\lm}(\al)= \frac{\al e^{-b\sqrt{2\lm}}/\cosh(b\sqrt{2\lm})}{2\lm \,[
    \al^*(\lm)-\al]\,}$ and
    $B_\lm(\al)= e^{2 \sqrt{2\lm}\, b} A_{\lm}(\al)$.  We wish to show that \eqref{eq:Mlmal} still holds for a wider range of $\al$ and $\lm$ values using analytic continuation.  We first note that $\hat{f}$ has a singularity when $\al=\al^*(\lm)$, and by Theorems 5a and 5b on page 57 in \cite{Wid46}, we know that the abscissa of convergence for a Laplace transform is a point of singularity and the Laplace transform is analytic in its region of convergence.

We are interested in the values of $\al\in \R$ and $\lm\in \C$ such that the Laplace transform $\hat{f}(x;\lm,\al)=\int_0^{\infty} e^{-\lm t}\, \mathbb{E}_x(e^{\al L_t}) dt$ is finite.  We recall the following fact: for any fixed $x\in[0,b]$,

\nind $(\dagger)$ $\hat{f}(x;\lm,\al)<\infty$ for $\al<0$ and $\lm>0$.

\nind We now proceed in three stages:

\begin{itemize}
\item
Fix $\lm>0$ (so $\lm \in \mathbb{R}$).  We apply the Widder results with $\al$ as the Laplace variable, i.e. we consider
 \bq
 \mathbb{E}(e^{\al L_{\tau}}) =\int_0^{\infty} e^{\al y} dF(y)\,,\nn
 \eq
 \nind where $F(y)$ is the distribution function of $L_{\tau}$.  By $(\dagger)$, the region of convergence is non-empty.  We can then extend the region of convergence up to $\al^*(\lm)>0$, as $\al^*(\lm)$ is the point of singularity.\\

\item
Fix $\al<0$. We apply the Widder results again, but we now take $\lm$ as the Laplace variable. By $(\dagger)$, the region of convergence is non-empty.  According to Widder, the abscissa of convergence (say $\lm_c$) is a point of singularity and $\hat{f}(x;\lm,\al)$ is analytic in $\lambda$ when $\Re(\lm)>\lm_c$.  So we are looking at a point of singularity on the real line, and this is the value of $\lm_c$ that satisfies $\al^*(\lm_c)=\al$.  Or, in other words, $\lm_c=(\al^*)^{-1}(\al)=V(\al)$. Thus, by Widder, $\hat{f}(x;\lm,\al)$ is finite when $\Re(\lm)>V(\al)$.\\

\item
Fix $\al>0$. We apply Widder's theorem using $\lm$ as the Laplace variable. By the first bullet point, we know that there exists some $\lm\in \R$ (such that $\al<\al^*(\lm)$),  for which $\hat{f}(x;\lm,\al)$ is finite. Hence, the region of convergence of $\hat{f}(x;\lm,\al)$ is non-empty for this $\al$.  Then, by Widder, the abscissa of convergence $\lm_c$  is a point of singularity and $\hat{f}(x;\lm,\al)$ is analytic for $\Re(\lm)>\lm_c$. The singularity is at $\al=\al^*(\lm)$. Solving for points of singularity on the real line i.e. solving for $\lm_c$ in $\al=\al^*(\lm_c)$,  gives us $\lm_c=V(\al)$ and so $\hat{f}(x;\lm,\al)$ converges when $\Re(\lm)>\lm_c=V(\al)$.\\

\end{itemize}

This gives the region of $\lm$ and $\al$ for which $\hat{f}(x;\lm,\al)$ converges: for every $\al\in \R$ and $\lm\in \C$ such that $\Re(\lm)>V(\al)$, and $\hat{f}(x;\lm,\al)$ is analytic in this region.


\renewcommand{\theequation}{C-\arabic{equation}}
\setcounter{equation}{0}  
\section{Proof of Proposition \ref{prop:LargeTmgf}}\label{LargeTmgfproof}

 From the known large-time behaviour of the local time of standard Brownian motion, we expect that $\ex_x(e^{\al L_t}) \sim const. \times \,e^{U(\al) t}$ as $t \to \infty$, for some non-decreasing function $U(\al)$ to be determined.  Then as $t\to\infty$,
\bq
\hat{f}(x;\lm,\al) =\int_0^{\infty} e^{-\lm t}\, \mathbb{E}_x(e^{\al L_t}) dt  \sim \int_0^{\infty} e^{-\lm t} \, const.\times e^{U(\al) t}dt,
\eq
and $\hat{f}(x;\lm,\al)$ blows up when $\lm=U(\al)$ (for $\al$ fixed).  But we know that $\hat{f}(x;\lm,\al)$ blows up at $\al=\al^*(\lm)$; thus we expect that $\lm=U(\al^*(\lm))$, i.e. $U(\al)=(\al^*)^{-1}(\al)=V(\al)$.  We now make this statement rigorous using a variant of Ikehara's Tauberian Theorem (see e.g. Theorem 17 on page 233 in Widder\cite{Wid46}).

We first define a positive  function $v$ on $\mathbb{R}$:
\[v(t)\equiv v(t;x,\alpha):=\mathbf{1}_{t\ge 0}\,e^{-V(\al)t}\ex_x(e^{\al L_t}).\]
Then the Laplace transform of $v$ is given by
\bq
\hat{v}(\lm)=\int_0^\infty e^{-\lm t}v(t)dt=\int_0^\infty e^{-(\lm+V(\al))t}\ex_x(e^{\al L_t})dt = \hat{f}_{\lm+V(\al),\al}(x)\,,\nn
\eq
which, by Lemma \ref{lem:AE} 
is analytic for all $\lm\in\mathbb{C}$ such that $\Re(\lm)>0$.
%
  We now need to characterize how $\hat{v}(\lm) $ blows up as $\Re(\lm) \downarrow 0$.  To this end, looking at the expression for $A_{\lm}(\al)$,
  we notice that $A_{\lm}(\al)$ has a pole at $\lm=V(\al)\in (-\frac{\pi^2}{8b^2},\infty)$, and is analytic elsewhere for $\Re(\lm)>-\frac{\pi^2}{8b^2}$ (see Remark \ref{rem:0}). It is also easily seen that, ${\al^*}'(\lm)>0$ for all $\lm\in(-\frac{\pi^2}{8b^2},\infty)$. Hence, by the Laurent expansion of $\hat{v}(\lm)$ at $0$,  there exists a function $g(\lm)$, which is analytic for all $\lm \in \mathbb{C}$ with $\Re(\lm)>-\e$ and $|\Im(\lm)|\le c$ for some constants $\e,c>0$, such that
  \be
  \hat{v}(\lm)=\frac{C}{\lm}\,+\,g(\lm)\,\nn
  \ee
  for some constant $C$ which we find to be positive ($C$ is the residue of $\hat{v}$ at $\lm=0$).  $g(x+iy)$ is continuous on $\mathcal{D}:=\{(x,y)\,:\, |x|\le\e, |y|\le c\}$, thus $g(x+iy)$ is  uniformly continuous on $\mc{D}$, so $g(x+iy)\to g(iy)$ uniformly as $x\downarrow 0$ for any fixed $y\in [-c,c]$.  Moreover, for any $x>0$
\bq
\int_{-c}^c |\hat{v}(x+iy)-\frac{C}{x+iy} - g(iy)|\,dy =  \int_{-c}^c |g(x+iy)-g(iy)|dy \nn
\eq
Since $g$ is analytic everywhere and uniformly continuous, if we take the limit as $x\to 0$, the above integral converges to $0$, so the function $g(x+i\cdot)$ also converge to $g(i\cdot)$ in $\mathbb{L}^1([-c,c])$, as $x\downarrow0$.

  We can now apply Proposition 4.3 in \cite{Kor02} to obtain that for the ``Fej\'er kernel'' $K(t)=\frac{1-\cos t}{\pi t^2}$,
  \bq\label{eq:asym}
  \lim_{t\to\infty}\int_{-\infty}^{ct}v(t-\frac{s}{c})\cdot  K(s)ds &=& C.
  \eq
  We now proceed as in the proof of Theorem 4.2 in \cite{Kor02} to show that $v(t)=O(1)$ as $t\to\infty$.

  \begin{enumerate}
  \item $\al>0$.  In this case we know that $\ex_x(e^{\al L_t})$ is non-decreasing, so $v(t)\ge v(s) e^{V(\al)(s-t)}$ for all $t\ge s\ge 0$. For any fixed $a>0$, using \eqref{eq:asym} we have that
  \begin{multline}
  C=\lim_{t\to\infty}\int_{-\infty}^{ct}v(t-\frac{s}{c})\cdot K(s)ds\ge \limsup_{t\to\infty}\int_{-a}^a v(t-\frac{s}{c})\cdot K(s)ds\ge\limsup_{t\to\infty} v(t-\frac{a}{c})\,e^{-2V(\al)\frac{a}{c}}\int_{-a}^a K(s)ds \,,\nn
  \end{multline}
  which implies that
  \bq
  \limsup_{t\to\infty}v(t) \le \frac{e^{2V(\al)\frac{a}{c}}}{\int_{-a}^aK(s)ds}\,C \,\,\,\,<\,\,\,\, \infty \,. \nn
  \eq
  Hence, there exists a constant $M>0$ such that $v(t)\le M$ for all $t$.
Similarly, for any fixed $a>0$, we have
  \begin{multline}
\liminf_{t\to\infty}v(t+\frac{a}{c})\,e^{2V(\al)\frac{a}{c}}\int_{-a}^a K(s)ds \ge \liminf_{t\to\infty}\int_{-a}^{a}v(t-\frac{s}{c}) K(s)ds\\
  =\liminf_{t\to\infty}\,(\int_{0}^{ct}\,+\,\int_{ct}^{\infty}\,-\, \int_{-\infty}^{-a}\,-\,\int_a^{\infty}) v(t-\frac{s}{c}) K(s)ds \ge\liminf_{t\to\infty}\,(\int_{0}^{ct}+\int_{ct}^{\infty})v(t-\frac{s}{c}) K(s)\nn \\
  -\limsup_{t\to\infty}\int_{-\infty}^{-a}\,v(t-\frac{s}{c}) K(s)\,-\,\limsup_{t\to\infty}\int_a^{\infty} v(t-\frac{s}{c}) K(s)\ge C\,-\,\frac{4M}{\pi}\int_a^\infty\frac{1}{s^2}ds  = C \,-\, \frac{4M}{\pi a}\,,\nn
  \end{multline}
  where we have used \eqref{eq:asym} and the fact that $0\le K(t)\le \frac{2}{\pi t^2}$ in the last inequality. Hence, for $a>0$ sufficiently large, we have
  \bq
  \liminf_{t\to\infty}v(t) \ge \frac{e^{-2V(\al)\frac{a}{c}}}{\int_{-a}^aK(s)ds}\,(C-4M/\pi a)\,\,\,\,> \,\,\,\, 0.\nn
  \eq
  \item $\al<0$. In this case we know that $\ex_x(e^{\al L_t})$ is non-increasing, so $v(t)\le v(s)e^{V(\al)(s-t)}$ for all $t\ge s\ge 0$. Using the same argument as above, we have, for any fixed $a>0$,
  \bq
  C e^{2V(\al)\frac{a}{c}}&\ge&\limsup_{t\to\infty}v(t+\frac{a}{c})\,\int_{-a}^aK(s)ds,\nn\\
  (C-\frac{4M}{\pi a}) \, e^{-2V(\al)\frac{a}{c}}&\le&\liminf_{t\to\infty}v(t-\frac{a}{c})\,\,\int_{-a}^aK(s)ds \,.\nn
  \eq
  Hence for $a>0$ sufficiently large, we have
  \bq
  0  \le \frac{e^{-2V(\al)\frac{a}{c}}}{\int_{-a}^aK(s)ds}\,(C-4M/\pi a) \le \liminf_{t\to\infty} v(t) \le \limsup_{t\to\infty}v(t) \le \frac{e^{2V(\al)\frac{a}{c}}}{\int_{-a}^aK(s)ds} \,C<\infty\, . \nn
  \eq
  \end{enumerate}
  Hence, by Proposition 4.3 in \cite{Kor02}, the result follows.\renewcommand{\theequation}{D-\arabic{equation}}
\setcounter{equation}{0}  

\section{Proof of Lemma \ref{lemma:rate_fn}}\label{rate_fnproof}

We break the proof into three parts:
\begin{itemize}
\item[(a)] Computing the Legendre transform of $V$ boils down to solving $V'(\al)=x$.  But this is the same as solving $(V^{-1})'(\lm)=\frac{1}{x}$ for $\lm$, when $x>0$. Recall that $V^{-1}(\cdot)=\al^*(\cdot)$ is known in closed form.   Since $(\al^*)''(\lm)<0$ for all $\lm$ (from Remark \ref{rem:al}), by the Inverse function theorem, $\lm^*(x):= ((\al^*)')^{-1}(1/x)$ is well-defined and  $\lm^*\in C^1((0,\infty))$.  Using the fact that $\al^*(\lm^*)=V^{-1}(\lm^*)$, we have
\bq
V^*(x)= x \al^* - V(\al^*)= x \al^*(\lm^*(x)) - \lm^*(x) \nn \,.
\eq

When $x=0$, the definition of $V^*$ in Lemma \ref{lemma:rate_fn} gives us $V^*(0)= \sup_{\al\in\mathbb{R}}\{-V(\al)\}=- \inf_{\al\in\mathbb{R}}\{V(\al)\}= - \lim_{\al\to-\infty}V(\al)= \pi^2/(8 b^2)$, where the last two equalities hold because  $V$ is a monotonically increasing function with range $(-\pi^2/(8 b^2),\infty)$.

\item[(b)] By the Inverse function theorem, we know that $\lm^*\in C^1((0,\infty))$ and so is $\al^*$, thus $V^*\in C^1((0,\infty))$. It is easy to check that $\lim_{x\downarrow 0}\{x \al^*(\lm^*(x))- \lm^*(x)\}=\pi^2/(8b^2)=V^*(0)$, which gives continuity of $V^*$ up to the boundary $x=0$. Using \eqref{V*-formula}, we obtain
\bq
(V^*)'(x)&=&\al^*(\lm^*(x))+x\cdot (\al^*)'(\lm^*(x))\cdot (\lm^*)'(x)-(\lm^*)'(x)\nn \\
&=&\al^*(\lm^*(x))+x\cdot \frac{1}{x}\cdot(\lm^*)'(x)-(\lm^*)'(x)=\al^*(\lm^*(x)) \nn \,
\eq
\nind $(V^*)'(x)=\al^*(\lm^*(x))$.  Thus we have (using again $(\al^*)''<0$)
\begin{align*}(V^*)''(x)&=(\al^*)'(\lm^*(x))\cdot (\lm^*)'(x)=\frac{1}{x}\cdot ((\al^*)'^{-1})'(\frac1 x)\cdot -\frac{1}{x^2}=-\frac{1}{x^3}\cdot \frac{1}{(\al^*)''(\lm^*(x))}>0\,.\end{align*}

\item[(c)]  Since $V^*$ is strictly convex, it has a unique minimum. The unique minimum of $V^*$ occurs at $x^*=((V^*)')^{-1}(0)=V'(0)=1/{\al^*}'(0)=\frac{1}{2b}$

\end{itemize}

\renewcommand{\theequation}{E-\arabic{equation}}
\setcounter{equation}{0}  

\bibliographystyle{elsarticle-num}







\end{document}